\title{\vspace{0cm}On a theorem of Campana and P\u{a}un}
\author{\vspace{0cm}Christian Schnell}
\institution{Department of Mathematics, Stony Brook University,
Stony Brook, NY 11794-3651, USA}\\
\email{cschnell@math.stonybrook.edu}}
\date{\vspace{-5ex}} 
\journal{\'Epijournal de G\'eom\'etrie Alg\'ebrique} 
\newtheorem{theorem}{Theorem}
\newtheorem{lemma}[theorem]{Lemma}
\newtheorem{remark}[theorem]{Remark}
\newtheorem{proposition}[theorem]{Proposition}
\newcommand{\theoremref}[1]{Theorem \ref{#1}}
\newcommand{\lemmaref}[1]{Lemma \ref{#1}}
\newcommand{\propositionref}[1]{Proposition \ref{#1}}
\tikzset{commutative diagrams/arrow style=Latin Modern}
\newcommand{\shT}{\mathscr{T}}
\newcommand{\tensor}{\otimes}
\newcommand{\RR}{\mathbb{R}}
\newcommand{\PP}{\mathbb{P}}
\DeclareMathOperator{\rk}{rk}
\newcommand{\define}[1]{\emph{#1}}
\newcommand{\shf}[1]{\mathscr{#1}}
\newcommand{\OX}{\shf{O}_X}
\newcommand{\OmX}{\Omega_X}
\newcommand{\restr}[1]{\big\vert_{#1}}
\def\overbar#1#2#3{{%
        \setbox0=\hbox{\displaystyle{#1}}%
        \dimen0=\wd0
        \advance\dimen0 by -#2 
        \vbox {\nointerlineskip \moveright #3 \vbox{\hrule height 0.3pt width \dimen0}%
                \nointerlineskip \vskip 1.5pt \box0}%
}}
\newcommand{\into}{\hookrightarrow}
\newcommand{\pu}{p^{\ast}}
\newcommand{\shF}{\shf{F}}
\newcommand{\shG}{\shf{G}}
\newcommand{\shB}{\mathcal{B}}
\newcommand{\parref}[1]{\hyperref[#1]{\S\ref*{#1}}}
\newcommand{\chapref}[1]{\hyperref[#1]{Chapter~\ref*{#1}}}
\newcommand*\if@single[3]{%
  \setbox0\hbox{${\mathaccent"0362{#1}}^H$}%
  \setbox2\hbox{${\mathaccent"0362{\kern0pt#1}}^H$}%
  \ifdim\ht0=\ht2 #3\else #2\fi
  }
\newcommand*\rel@kern[1]{\kern#1\dimexpr\macc@kerna}
\newcommand*\widebar[1]{\@ifnextchar^{{\wide@bar{#1}{0}}}{\wide@bar{#1}{1}}}
\newcommand*\wide@bar[2]{\if@single{#1}{\wide@bar@{#1}{#2}{1}}{\wide@bar@{#1}{#2}{2}}}
\newcommand*\wide@bar@[3]{%
  \begingroup
  \def\mathaccent##1##2{%
    \if#32 \let\macc@nucleus\first@char \fi
    \setbox\z@\hbox{$\macc@style{\macc@nucleus}_{}$}%
    \setbox\tw@\hbox{$\macc@style{\macc@nucleus}{}_{}$}%
    \dimen@\wd\tw@
    \advance\dimen@-\wd\z@
    \divide\dimen@ 3
    \@tempdima\wd\tw@
    \advance\@tempdima-\scriptspace
    \divide\@tempdima 10
    \advance\dimen@-\@tempdima
    \ifdim\dimen@>\z@ \dimen@0pt\fi
    \rel@kern{0.6}\kern-\dimen@
    \if#31
      \overline{\rel@kern{-0.6}\kern\dimen@\macc@nucleus\rel@kern{0.4}\kern\dimen@}%
      \advance\dimen@0.4\dimexpr\macc@kerna
      \let\final@kern#2%
      \ifdim\dimen@<\z@ \let\final@kern1\fi
      \if\final@kern1 \kern-\dimen@\fi
    \else
      \overline{\rel@kern{-0.6}\kern\dimen@#1}%
    \fi
  }%
  \macc@depth\@ne
  \let\math@bgroup\@empty \let\math@egroup\macc@set@skewchar
  \mathsurround\z@ \frozen@everymath{\mathgroup\macc@group\relax}%
  \macc@set@skewchar\relax
  \let\mathaccentV\macc@nested@a
  \if#31
    \macc@nested@a\relax111{#1}%
  \else
    \def\gobble@till@marker##1\endmarker{}%
    \futurelet\first@char\gobble@till@marker#1\endmarker
    \ifcat\noexpand\first@char A\else
      \def\first@char{}%
    \fi
    \macc@nested@a\relax111{\first@char}%
  \fi
  \endgroup
}
\newcommand{\shQ}{\mathscr{Q}}
\renewcommand{\shB}{\mathscr{B}}
\newcommand{\shFdel}{\shF_{\Delta}}
\newcommand{\htensor}{\hat{\tensor}}
\newcommand{\shQtor}{\shQ_{\mathit{tor}}}
\newcommand{\Deltahor}{\Delta^{\!\mathit{hor}}}
\newcommand{\Xtl}{\tilde{X}}
\newcommand{\ptl}{\tilde{p}}
\newcommand{\alphatl}{\tilde{\alpha}}
\newcommand{\shFtl}{\tilde{\shF}}
\newcommand{\Deltatl}{\tilde{\Delta}}
\begin{document}


\maketitle



\begin{prelims}


\def\abstractname{Abstract}
\abstract{Let $X$ be a smooth projective variety over the complex numbers, and
$\Delta \subseteq X$ a reduced divisor with normal crossings. We present a slightly
simplified proof for the following theorem of Campana and P\u{a}un: If some tensor
power of the bundle $\OmX^1(\log \Delta)$ contains a subsheaf with big determinant,
then $(X, \Delta)$ is of log general type.  This result is a key step in the recent
proof of Viehweg's hyperbolicity conjecture.}

\keywords{Viehweg's hyperbolicity conjecture; log general type; log cotangent bundle;
foliation; movable curve class; slope semi-stability}

\MSCclass{14E99; 14F10}


\languagesection{Fran\c{c}ais}{%

\textbf{Titre. Sur un th\'eor\`eme de Campana et P\u{a}un}
\commentskip
\textbf{R\'esum\'e.}
Soit $X$ une vari\'et\'e projective complexe lisse et $\Delta\subseteq X$ un diviseur r\'eduit \`a croisements normaux. Nous pr\'esentons une d\'emonstration l\'eg\`erement simplifi\'ee du th\'eor\`eme suivant de Campana et P\u{a}un : si une puissance tensorielle du fibr\'e $\Omega^1_X(\log(\Delta))$ contient un faisceau dont le d\'eterminant est \emph{big}, la paire $(X,\Delta)$ est alors de log-type g\'en\'eral. Ce r\'esultat est une \'etape cl\'e dans la r\'ecente d\'emonstration de la conjecture d'hyperbolicit\'e de Viehweg.}

\end{prelims}


\newpage

\setcounter{tocdepth}{1}
\tableofcontents

\section{Introduction}

The purpose of this paper is to present a slightly simplified proof for the following
result by Campana and P\u{a}un \cite[Theorem~7.6]{CP}. It is a crucial step in the
proof of Viehweg's hyperbolicity conjecture for families of canonically polarized
manifolds \cite[Theorem~7.13]{CP}, and more generally, for smooth families of
varieties of general type \cite[Theorem~A]{families}.

\begin{theorem} \label{thm:CP}
Let $X$ be a smooth projective variety, and $\Delta \subseteq X$ a reduced
divisor with at worst normal crossing singularities. If some tensor power of
$\OmX^1(\log \Delta)$ contains a subsheaf with big determinant, then $K_X + \Delta$
is big.
\end{theorem}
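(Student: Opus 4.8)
The plan is to convert the hypothesis into a statement about slopes against movable curve classes, and then to contradict the failure of bigness. Write $E=\OmX^1(\log\Delta)$, so that $\det E=\OX(K_X+\Delta)$, and recall that by the duality theorem of Boucksom--Demailly--P\u{a}un--Peternell the pseudo-effective cone of divisors is dual to the closed cone of movable curve classes. Consequently $K_X+\Delta$ is big precisely when $(K_X+\Delta)\cdot\alpha>0$ for every nonzero movable class $\alpha$; if it is \emph{not} big, there is a nonzero movable class $\alpha_0$ with $(K_X+\Delta)\cdot\alpha_0\le 0$. I would fix such an $\alpha_0$ (taking it, after passage to a strongly movable complete-intersection class represented by an actual movable curve, so that slope stability and the Harder--Narasimhan formalism are available) and argue by contradiction.

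The first, formal, step is to pass from the tensor power down to $E$ itself. Let $\shF\subseteq E^{\tensor m}$ be the given subsheaf with $\det\shF$ big. Big divisors lie in the interior of the pseudo-effective cone, so $\det\shF\cdot\alpha_0>0$ for our nonzero movable $\alpha_0$, whence $\mu_{\alpha_0}(\shF)>0$ and therefore $\mu_{\alpha_0,\max}(E^{\tensor m})>0$. In characteristic zero the maximal destabilizing slope is additive under tensor product -- this is the semistability of tensor products of $\alpha_0$-semistable sheaves in the Harder--Narasimhan formalism for a movable polarization -- so $\mu_{\alpha_0,\max}(E^{\tensor m})=m\,\mu_{\alpha_0,\max}(E)$. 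Hence $\mu_{\alpha_0,\max}(E)>0$: the maximal destabilizing subsheaf $\shG\subseteq\OmX^1(\log\Delta)$ has strictly positive slope, while the total bundle has $\mu_{\alpha_0}(E)=\tfrac{1}{\dim X}(K_X+\Delta)\cdot\alpha_0\le 0$.

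It remains to see that a saturated $\alpha_0$-semistable subsheaf $\shG\subseteq\OmX^1(\log\Delta)$ of positive slope cannot coexist with $(K_X+\Delta)\cdot\alpha_0\le 0$; this is the heart of the theorem and the step I expect to carry all the difficulty. The mechanism is the theory of foliations. One wants to show that $\shG$ (or, dually, the induced quotient of $T_X(-\log\Delta)$) is integrable, i.e. defines a foliation on the pair $(X,\Delta)$: integrability amounts to the vanishing of the $\OX$-linear O'Neill-type tensor $\shG\to\wedge^2\bigl(\OmX^1(\log\Delta)/\shG\bigr)$ obtained from the logarithmic exterior derivative, which follows by a slope comparison provided the positive slope of $\shG$ dominates twice the maximal slope of the quotient. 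Once integrability is in hand, the positivity of the foliation forces it -- by the algebraic-integrability criterion of Bogomolov--McQuillan together with the logarithmic form of Miyaoka's generic semipositivity theorem, in the guise given to them by Campana--P\u{a}un -- to be algebraically integrable with rationally connected general leaves. Comparing $K_X+\Delta$ with the relative log-canonical class of the resulting family, evaluated against $\alpha_0$, then produces $(K_X+\Delta)\cdot\alpha_0>0$, contradicting the choice of $\alpha_0$.

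The genuine obstacle is entirely contained in this last paragraph. The slope comparison needed for integrability is not automatic -- the maximal destabilizing subsheaf of the cotangent bundle need not have slope larger than twice that of the next graded piece -- so one cannot simply quote Frobenius; this is exactly where the generic semipositivity theorem must be fed back in to control the lower Harder--Narasimhan slopes, and where Campana and P\u{a}un's careful (and Schnell's streamlined) bookkeeping of slopes, movable classes, and the birational modifications needed to realize the leaf space becomes indispensable. By contrast, the reduction in the first two paragraphs is robust and essentially formal, once the Harder--Narasimhan theory for movable polarizations and the tensor-product multiplicativity of the maximal slope are granted.
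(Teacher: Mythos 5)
Your first two paragraphs are correct and run parallel to the paper's own reduction: BDPP duality, positivity of the slope of a subsheaf with big determinant, and multiplicativity of the maximal slope under tensor product (the paper achieves the same effect by extracting a big line bundle inside the tensor power, \lemmaref{lem:line-bundle}). The gap is your third paragraph, which is where you yourself locate ``all the difficulty,'' and the mechanism you sketch there is set up on the wrong side of the duality. Working with a subsheaf $\shG \subseteq \OmX^1(\log \Delta)$, the integrability tensor maps $\shG$ into $\wedge^2 \bigl( \OmX^1(\log \Delta)/\shG \bigr)$: the exterior square sits on the \emph{target}, so the estimate you would need ($\mu_{\alpha_0}(\shG)$ larger than twice the maximal slope of the quotient) is genuinely unavailable, as you concede. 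The paper avoids this entirely by working on the tangent side. Note that your own numerics already put you there: $\mu_{\alpha_0,\max}(E) > 0$ together with $\mu_{\alpha_0}(E) \le 0$ forces $\mu_{\alpha_0,\min}(E) < 0$, so $E$ has a quotient of negative slope and hence $\shT_X(-\log \Delta)$ has a subsheaf of positive slope. Taking $\shFdel$ to be the maximal destabilizing subsheaf of $\shT_X(-\log \Delta)$, the O'Neil tensor is a map $\shFdel \htensor \shFdel \to \shT_X(-\log \Delta)/\shFdel$ whose source is $\alpha$-semistable of slope $2\mu_{\alpha}(\shFdel)$, while every subsheaf of the target has slope strictly less than $\mu_{\alpha}(\shFdel)$ by maximality; hence it vanishes. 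The doubling that ruins the cotangent-side comparison is exactly what makes the tangent-side comparison automatic. Your proposed rescue --- feeding ``generic semipositivity'' back in to control the lower Harder--Narasimhan slopes --- is not an argument, and it is not what Campana--P\u{a}un or this paper do; no Miyaoka-type generic semipositivity theorem enters the proof at any point.

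Even granting integrability, your endgame (``comparing $K_X + \Delta$ with the relative log-canonical class \dots produces $(K_X+\Delta)\cdot\alpha_0 > 0$'') compresses into one sentence the steps that carry the remaining content, and it omits the structural device that makes them work: induction on dimension. What is actually needed is (i) saturating $\shFdel$ inside $\shT_X$, not inside $\shT_X(-\log \Delta)$, to obtain a foliation $\shF$ on $X$; (ii) the algebraicity theorem for foliations with respect to \emph{movable} classes \cite[Theorem~1.1]{CP} --- Bogomolov--McQuillan alone covers only complete-intersection curve classes --- producing $p \colon X \dashrightarrow Z$ with $\shF = \shT_{X/Z}$ and $\dim Z \ge 1$, which after a birational modification can be assumed to be a morphism; (iii) the identity $c_1(\shT_{X/Z}) = -K_{X/Z} + R(p)$ together with the effectivity of $c_1(\shF) - c_1(\shFdel) - \Deltahor$, yielding $-\bigl( K_{X/Z} + \Deltahor - R(p) \bigr) \cdot \alpha > 0$; and (iv) the contradiction, which comes from applying the theorem being proved \emph{in lower dimension} to a general fiber $(F, \Delta_F)$ to get $K_F + \Delta_F$ big, and then invoking the positivity theorems for relative log-canonical classes \cite[Theorems~7.1 and~7.3]{CKT} to conclude that $K_{X/Z} + \Deltahor - R(p)$ is pseudo-effective. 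Without the induction in (iv) no contradiction is reached, and your proposal never sets it up. A minor point: drop the parenthetical passage to ``an actual movable curve''; the class $\alpha_0$ produced by duality lies only in the closed movable cone, and the slope and Harder--Narasimhan theory you need \cite{GKP} is built precisely for such classes, so the detour is both unnecessary and potentially lossy.
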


The simplification is that I have substituted an inductive procedure for the
arguments involving Campana's ``orbifold cotangent bundle''; otherwise, the proof of
\theoremref{thm:CP} that I present here is essentially the same as in the one in
\cite{CP}. My reason for writing this paper is that it gives me a chance to draw
attention to some of the beautiful ideas involved in the proof by Campana and
P\u{a}un: slope stability with respect to movable classes; a criterion for the leaves
of a foliation to be algebraic subvarieties; and positivity results for relative
canonical bundles. 

\begin{remark}
{\rm The most recent \texttt{arXiv} version of the paper by Campana and P\u{a}un
(from June 14, 2017) also contains a brief summary of our proof; see
\cite[Section~8.1]{CP}.}
\end{remark}

\section{Strategy of the proof}

Let $(X, \Delta)$ be a pair, consisting of a smooth projective variety $X$ and a
reduced divisor $\Delta \subseteq X$ with at worst normal crossing
singularities. We denote the logarithmic cotangent bundle by the symbol $\OmX^1(\log
\Delta)$, and its dual, the logarithmic tangent bundle, by the symbol $\shT_X(-\log
\Delta)$. Recall that $\shT_X(-\log \Delta)$ is naturally a subsheaf of the tangent
bundle $\shT_X$, and that it is closed under the Lie bracket on $\shT_X$. Indeed,
suppose that $\Delta$ is given, in suitable local coordinates $x_1, x_2, \dotsc,
x_n$, by the equation $x_1 x_2 \dotsm x_k = 0$; then $\shT_X(-\log \Delta)$ is
generated by the $n$ commuting vector fields 
\[
        x_1 \frac{\partial}{\partial x_1}, \dotsc, x_k \frac{\partial}{\partial x_k},
        \frac{\partial}{\partial x_{k+1}}, \dotsc, \frac{\partial}{\partial x_n},
\]
and is therefore closed under the Lie bracket.

Suppose that $\OmX^1(\log \Delta)^{\tensor N}$ contains a subsheaf
with big determinant, for some $N \geq 1$. The following observation reduces the
problem to the case of line bundles.

\begin{lemma} \label{lem:line-bundle}
If $\OmX^1(\log \Delta)^{\tensor N}$ contains a subsheaf of generic rank $r \geq 1$
and with big determinant, then $\OmX^1(\log \Delta)^{\tensor Nr}$ contains a big line
bundle.
\end{lemma}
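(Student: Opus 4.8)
The plan is to reduce to the rank-one case by taking the top exterior power of the inclusion. Write $E = \OmX^1(\log \Delta)^{\tensor N}$, so that $\shF \subseteq E$ is a subsheaf with $\rk \shF = r$ and $\det \shF$ big. Applying $\wedge^r$ to the inclusion $\shF \into E$ produces a natural homomorphism $\wedge^r \shF \to \wedge^r E$ of generic rank one. Since we are in characteristic zero, the antisymmetrization operator $\tfrac{1}{r!} \sum_{\sigma \in S_r} \sgn(\sigma)\, \sigma$ splits the canonical surjection $E^{\tensor r} \onto \wedge^r E$ and thereby realizes $\wedge^r E$ as a subsheaf of $E^{\tensor r} = \OmX^1(\log \Delta)^{\tensor Nr}$. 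Composing, I obtain a rank-one subsheaf $\shB \subseteq \OmX^1(\log \Delta)^{\tensor Nr}$, namely the image of $\wedge^r \shF$. Its saturation $\shf{L}$ inside the locally free sheaf $\wedge^r E$ is the kernel of a map from a locally free sheaf to a torsion-free one, hence reflexive of rank one, and therefore a line bundle because $X$ is smooth.

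It then remains to identify $\shf{L}$ up to an effective divisor and to verify that it is big. The natural map $\wedge^r \shF \to (\wedge^r \shF)^{\ast\ast} = \det \shF$ is an isomorphism in codimension one, since over the local ring at the generic point of any prime divisor a finitely generated torsion-free module is free; hence $\shB$ and $\det \shF$ agree away from a closed set of codimension $\geq 2$, so that $\shB^{\ast\ast} \cong \det \shF$. To compare $\shB$ with its saturation $\shf{L}$ I would argue locally at the generic point $\eta$ of a prime divisor $D$: there $\OX$ is a discrete valuation ring with uniformizer $t$, and by the structure theorem for submodules I may choose a basis $e_1, \dotsc, e_m$ of $E_\eta$ together with integers $a_1, \dotsc, a_r \geq 0$ such that $\shF_\eta$ is freely generated by $t^{a_1} e_1, \dotsc, t^{a_r} e_r$. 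Then $\shB_\eta$ is generated by $t^{a_1 + \dotsb + a_r}(e_1 \wedge \dotsb \wedge e_r)$, while its saturation $\shf{L}_\eta$ is generated by $e_1 \wedge \dotsb \wedge e_r$. Thus passing from $\shB$ to $\shf{L}$ only adds sections along divisors, and combining this with $\shB^{\ast\ast} \cong \det \shF$ yields an isomorphism $\shf{L} \cong \det \shF \tensor \OX(D_0)$ for some effective divisor $D_0 \geq 0$.

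Since $\det \shF$ is big and $D_0$ is effective, the line bundle $\shf{L} \cong \det \shF \tensor \OX(D_0)$ is again big, and it sits inside $\wedge^r E \subseteq \OmX^1(\log \Delta)^{\tensor Nr}$; this is the desired big line bundle. The one point requiring care is the divisorial locus where $\shF$ fails to be a subbundle of $E$: there $\wedge^r$ of the inclusion need not remain injective, and the image $\shB$ genuinely differs from $\det \shF$ in codimension one. The local computation above is what controls this discrepancy, showing that it is measured by the nonnegative integers $a_i$ and therefore only enlarges the line bundle. This is exactly what preserves bigness, which could be destroyed by subtracting an effective divisor but never by adding one.
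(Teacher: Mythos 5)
Your proof is correct, and it reaches the goal by a route that differs from the paper's in its order of operations. Both arguments ultimately rest on the same two facts: in characteristic zero the top exterior power sits inside the $r$-th tensor power as a direct summand (antisymmetrization), and bigness survives twisting by an effective divisor. The paper, however, saturates the rank-$r$ subsheaf $\shB$ \emph{first}: then the quotient $\OmX^1(\log \Delta)^{\tensor N}/\shB$ is torsion-free, so $\shB$ is a subbundle outside a closed set $Z$ of codimension $\geq 2$, and on $X \setminus Z$ one has honest inclusions of locally free sheaves $\det \shB \into \shB^{\tensor r} \into \OmX^1(\log \Delta)^{\tensor Nr}$; the map from the line bundle $\det \shB$ then extends across $Z$ by Hartogs. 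This makes the whole proof four lines and requires no local algebra, because saturating at the rank-$r$ level pushes every problem into codimension $\geq 2$, where Hartogs disposes of it. You instead wedge first, take the image of $\wedge^r \shF$ in $\wedge^r E$, and saturate only at the end, at the rank-one level; the price is that the failure of injectivity in codimension one must be analyzed by hand, which is exactly what your elementary-divisor computation over the DVR accomplishes. What your version buys is explicitness: you identify the discrepancy divisor $D_0$ concretely, you avoid invoking the Hartogs-type extension principle for sheaf maps, and in effect you re-prove (rather than quote as "of course") the fact the paper uses at the outset, namely that passing to a saturation only enlarges the determinant by an effective divisor. The paper's version buys brevity. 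Both are complete proofs.
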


\begin{proof}
Let $\shB \subseteq \OmX^1(\log \Delta)^{\tensor N}$ be a subsheaf of generic rank $r
\geq 1$, with the property that $\det \shB$ is big. After replacing $\shB$ by its
saturation, whose determinant is of course still big, we may assume that the quotient sheaf 
\[
        \OmX^1(\log \Delta)^{\tensor N} \big/ \shB
\]
is torsion-free, hence locally free outside a closed subvariety $Z \subseteq X$ of
codimension $\geq 2$. On $X \setminus Z$, we have an inclusion of locally free
sheaves
\[
        \det \shB \into \shB^{\tensor r} \into \OmX^1(\log \Delta)^{\tensor Nr},
\]
which remains valid on $X$ by Hartog's theorem.
\hfill $\Box$
\end{proof}

For the purpose of proving \theoremref{thm:CP}, we are therefore allowed to assume
that $\OmX^1(\log \Delta)^{\tensor N}$ contains a big line bundle $L$ as a subsheaf.
Let $\shQ$ denote the quotient sheaf, and consider the resulting short exact sequence
\begin{equation} \label{eq:SES}
        0 \to L \to \OmX^1(\log \Delta)^{\tensor N} \to \shQ \to 0.
\end{equation}
Since $K_X + \Delta$ represents the first Chern class of $\OmX^1(\log \Delta)$, we
obtain 
\[
        N \cdot (\dim X)^{N-1} \cdot (K_X + \Delta) = c_1(L) + c_1(\shQ)
\]
in $N^1(X)_{\RR}$, the $\RR$-linear span of codimension-one cycles modulo numerical
equivalence. By assumption, the class $c_1(L)$ is big; \theoremref{thm:CP} will
therefore be proved if we manage to show that the class $c_1(\shQ)$ is
pseudo-effective. In fact, we are going to prove the following more general result,
which is of course just a special case of \cite[Theorem~7.6 and Theorem~1.2]{CP}.

\begin{theorem} \label{thm:main}
Let $X$ be a smooth projective variety, and $\Delta \subseteq X$ a reduced
divisor with at worst normal crossing singularities. Suppose that some tensor power
of $\OmX^1(\log \Delta)$ contains a subsheaf with big determinant. Then the first
Chern class of \emph{every} quotient sheaf of \emph{every} tensor power of
$\OmX^1(\log \Delta)$ is pseudo-effective.
\end{theorem}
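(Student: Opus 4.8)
The plan is to argue by contradiction, converting a non--pseudo-effective quotient into a foliation on $(X, \Delta)$ that turns out to be too positive to exist. Suppose then that some quotient sheaf $\shQ$ of some tensor power $\OmX^1(\log \Delta)^{\tensor m}$ has $c_1(\shQ)$ not pseudo-effective. By the duality between the pseudo-effective cone and the closed cone of movable curve classes (Boucksom--Demailly--P\u{a}un--Peternell), there is a movable class $\alpha \in N_1(X)_{\RR}$ with $c_1(\shQ) \cdot \alpha < 0$. I would fix this $\alpha$ once and for all and work with slope semi-stability measured against it, for which Harder--Narasimhan filtrations exist. The one substantial input at this stage is that the tensor product of two $\alpha$-semistable sheaves is again $\alpha$-semistable, so that $\mu_{\max,\alpha}$ and $\mu_{\min,\alpha}$ are additive under $\tensor$. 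Since $\shQ$ has positive rank and negative $\alpha$-degree, this gives $\mu_{\min,\alpha}\bigl(\OmX^1(\log \Delta)^{\tensor m}\bigr) < 0$, hence $\mu_{\min,\alpha}\bigl(\OmX^1(\log \Delta)\bigr) < 0$, and dually $\mu_{\max,\alpha}\bigl(\shT_X(-\log \Delta)\bigr) > 0$.

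Let $\shF \subseteq \shT_X(-\log \Delta)$ be the maximal $\alpha$-destabilizing subsheaf; it is saturated, $\alpha$-semistable, and has $\mu_\alpha(\shF) = \mu_{\max,\alpha} > 0$. The next step is to check that $\shF$ is a foliation, i.e. closed under the Lie bracket that $\shT_X(-\log \Delta)$ carries. The bracket induces an $\OX$-linear map $\shF \tensor \shF \to \shT_X(-\log \Delta) / \shF$: although the bracket itself is only $\CC$-bilinear, its Leibniz correction terms lie in $\shF$ and disappear modulo $\shF$. Now $\shF \tensor \shF$ is $\alpha$-semistable of slope $2\mu_{\max,\alpha}$, while every slope appearing in $\shT_X(-\log \Delta)/\shF$ is strictly below $\mu_{\max,\alpha}$; since $2\mu_{\max,\alpha} > \mu_{\max,\alpha}$ (this is where $\mu_{\max,\alpha} > 0$ is needed), such a map must vanish, and the bracket of two local sections of $\shF$ lies again in $\shF$.

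With a foliation $\shF$ of positive slope in hand, I would invoke the algebraicity criterion for its leaves: a foliation whose slope along a movable class is positive is algebraically integrable, with rationally connected general leaf (the logarithmic analogue of the Bogomolov--McQuillan criterion used by Campana and P\u{a}un). After replacing $X$ by a suitable birational modification that keeps $\Delta$ a normal crossing divisor and the big-determinant hypothesis intact, this produces a fibration $f \colon X \to Y$ whose general fibre is the closure of a leaf and to which $\shF$ is tangent, so that $\shF$ is the relative logarithmic tangent sheaf of $f$.

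The concluding step, which I expect to be the main obstacle, is to play this fibration against the big-determinant hypothesis, and this is where positivity of relative canonical bundles enters and where I would run an induction on $\dim X$. When $\shF = \shT_X(-\log \Delta)$, so that $Y$ is a point, the mechanism is transparent: $\OmX^1(\log \Delta)$ is then $\alpha$-semistable of negative slope, hence so is every tensor power, and such a sheaf can contain no subsheaf of positive $\alpha$-slope --- contradicting the fact that it contains the big line bundle $L$ of \lemmaref{lem:line-bundle}, whose slope $L \cdot \alpha$ is strictly positive. In general $\dim Y \geq 1$, and the rational connectivity of the fibres, together with the weak positivity of the direct images of the relative logarithmic canonical sheaves, should force the big subsheaf to be, up to lower-order terms, pulled back from $Y$; since a nonzero pullback from a base of strictly smaller dimension can never be big, this again contradicts the bigness coming from the hypothesis, and the induction closes. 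The delicate point throughout is the bookkeeping of the boundary under $f$ --- splitting $\Delta$ into horizontal and vertical parts and tracking the effect of the modification --- so that both the positivity statement for $K_{X/Y}$ plus boundary and the descent of the hypothesis to $(Y, \Delta_Y)$ hold with the correct normal crossing and saturation properties.
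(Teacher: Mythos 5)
Your first half tracks the paper's argument almost exactly: the BDPP duality producing the movable class $\alpha$, slope semistability and its additivity under tensor products, the maximal $\alpha$-destabilizing subsheaf of $\shT_X(-\log \Delta)$, and the vanishing of the bracket-induced tensor via the comparison $2\mu_{\alpha} > \mu_{\alpha}$. One caveat there: the algebraicity theorem actually available (and cited in the paper as \theoremref{thm:algebraic}) applies to saturated subsheaves of $\shT_X$, not of $\shT_X(-\log \Delta)$; there is no off-the-shelf ``logarithmic analogue'' to invoke. The paper bridges exactly this point by saturating $\shFdel$ inside $\shT_X$ to get $\shF$ and proving (\lemmaref{lem:foliation}) that the O'Neil tensor of $\shF$ still vanishes, because it factors through a torsion sheaf mapping into the torsion-free $\shT_X/\shF$. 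This gap is bridgeable, but it is a missing step in your write-up, not a citation.

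The genuine gap is the concluding step, which you yourself flag as the main obstacle. Your proposed mechanism --- rational connectivity of the fibres plus weak positivity forces the big subsheaf to be pulled back from $Y$, and a pullback from a lower-dimensional base cannot be big --- does not work as stated, precisely because of the horizontal boundary: a rationally connected fibre can carry many log forms (for $F = \PP^1$ and $\Delta_F$ two points one has $\Omega_F^1(\log \Delta_F) \cong \shO_F$), so nothing forces the image of $L$ to die along the fibre direction; indeed the paper explicitly remarks that rational connectivity is never used. Relatedly, you propose descending the hypothesis to the base $(Y,\Delta_Y)$, whereas the working induction descends it to the general \emph{fibre} $(F,\Delta_F)$, which has smaller dimension since $\dim Y \geq 1$: restricting $L \to \OmX^1(\log \Delta)^{\tensor k}$ to $F$ and filtering by the conormal sequence (whose normal-bundle part is trivial) yields a nonzero map $L_F \to \Omega_F^1(\log \Delta_F)^{\tensor j}$ with $j \geq 1$ because $L_F$ is big, so induction gives $K_F + \Delta_F$ big, hence pseudo-effective (\propositionref{prop:fiber}). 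The contradiction is then the quantitative one, not ``pullbacks are not big'': weak positivity (CKT, Theorems 7.1 and 7.3) upgrades bigness on fibres to pseudo-effectivity of $K_{X/Z} + \Deltahor - R(p)$, while the positive slope of $\shFdel$, combined with $c_1(\shT_{X/Z}) = -K_{X/Z} + R(p)$ and the key comparison that $c_1(\shF) - c_1(\shFdel) - \Deltahor$ is effective (\lemmaref{lem:difference}: the saturation picks up \emph{every} horizontal component of $\Delta$, since $\partial/\partial x_n$ lies in $\shT_{X/Z}$ but not in $\shT_X(-\log \Delta)$ along such a component), gives $-\bigl( K_{X/Z} + \Deltahor - R(p) \bigr) \cdot \alpha > 0$. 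Your sketch contains neither this slope inequality nor the $\Deltahor$ comparison, and without them there is nothing for the pseudo-effectivity statement to contradict, so the induction does not close.
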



\section{Slopes and foliations}

To simplify the presentation, we will prove \theoremref{thm:main} by
contradiction. Suppose then that, for some integer $N \geq 1$, and for some quotient
sheaf $\shQ$ of $\OmX^1(\log \Delta)^{\tensor N}$, the class $c_1(\shQ)$ was
\emph{not} pseudo-effective. Let $\shQtor \subseteq \shQ$ denote the torsion
subsheaf. Since
\[
        c_1(\shQ) = c_1(\shQtor) + c_1 \bigl( \shQ/\shQtor \bigr),
\]
and since $c_1(\shQtor)$ is effective, we may replace $\shQ$ by $\shQ/\shQtor$, and
assume without any loss of generality that $\shQ$ is torsion-free (and nonzero).

By the characterization of the pseudo-effective cone in \cite[Theorem~2.2]{BDPP},
there is a movable class $\alpha \in N_1(X)_{\RR}$ such that $c_1(\shQ) \cdot
\alpha < 0$. As shown in \cite{CPet,GKP}, there is a good theory of
$\alpha$-semistability for torsion-free sheaves, with almost all the properties that
are familiar from the case of complete intersection curves. We use this theory freely
in what follows. By assumption, 
\[
        \mu_{\alpha}(\shQ) = \frac{c_1(\shQ) \cdot \alpha}{\rk \shQ} < 0,
\]
and so $\shQ$ is a torsion-free quotient sheaf of $\OmX^1(\log \Delta)^{\tensor N}$
with negative $\alpha$-slope. The dual sheaf $\shQ^{\ast}$ is therefore a saturated
subsheaf of $\shT_X(-\log \Delta)^{\tensor N}$ with positive $\alpha$-slope. At this
point, we recall the following result about tensor products.

\begin{theorem} \label{thm:tensor}
Let $\alpha \in N_1(X)_{\RR}$ be a movable class. If $\shF$ and $\shG$ are
torsion-free and $\alpha$-semistable coherent sheaves on $X$, then their tensor product
\[
        \shF \htensor \shG = (\shF \tensor \shG) \big/ (\shF \tensor \shG)_{\mathit{tor}},
\]
modulo torsion, is again $\alpha$-semistable, and $\mu_{\alpha}(\shF \htensor \shG) =
\mu_{\alpha}(\shF) + \mu_{\alpha}(\shG)$.
\end{theorem}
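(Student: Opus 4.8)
The plan is to separate the two assertions: the slope formula, which is essentially formal, and the $\alpha$-semistability of the tensor product, which is the real content.

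First I would dispose of the slope formula. Since $\shF$ and $\shG$ are torsion-free, they are locally free away from a closed subset of codimension $\geq 2$; on this locally free locus $\shF \tensor \shG$ is locally free, hence torsion-free, so the torsion subsheaf $(\shF \tensor \shG)_{\mathit{tor}}$ is supported in codimension $\geq 2$ and contributes nothing to $c_1$. Therefore $c_1(\shF \htensor \shG) = c_1(\shF \tensor \shG) = \rk(\shG)\, c_1(\shF) + \rk(\shF)\, c_1(\shG)$, and since $\rk(\shF \htensor \shG) = \rk(\shF)\rk(\shG)$, intersecting with $\alpha$ gives $\mu_{\alpha}(\shF \htensor \shG) = \mu_{\alpha}(\shF) + \mu_{\alpha}(\shG)$ at once.

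Next I would reduce the semistability statement to the case of stable sheaves. Using the Jordan--H\"older filtrations of $\shF$ and $\shG$ for $\alpha$-semistability, whose graded pieces are $\alpha$-stable of slope $\mu_{\alpha}(\shF)$ and $\mu_{\alpha}(\shG)$ respectively, the induced filtration on $\shF \htensor \shG$ has subquotients of the form $\shF_i \htensor \shG_j$ with $\shF_i, \shG_j$ stable of the corresponding slopes. As a sheaf admitting a filtration by $\alpha$-semistable subsheaves of one and the same slope is itself $\alpha$-semistable of that slope, it is enough to prove that $\shF \htensor \shG$ is $\alpha$-semistable when both $\shF$ and $\shG$ are $\alpha$-stable.

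The heart of the argument is a reduction to curves. Because $\alpha$ is movable, the theory of \cite{CPet,GKP} provides a Mehta--Ramanathan-type restriction theorem: there is a modification $\pi \colon X' \to X$ and a complete-intersection curve $C \subseteq X'$, cut out by large multiples of ample divisors whose pushforward is associated with $\alpha$, such that $C$ avoids the codimension $\geq 2$ non-locally-free loci, the $\alpha$-slope of any sheaf is computed by the degree of its pullback restricted to $C$, and the restriction to $C$ of the pullback of an $\alpha$-semistable sheaf is again semistable. I would apply this to $\shF$, to $\shG$, and to any hypothetical saturated destabilizing subsheaf $\shE \subseteq \shF \htensor \shG$. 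On $C$ the restrictions $(\pi^{\ast}\shF)\restr{C}$ and $(\pi^{\ast}\shG)\restr{C}$ are semistable vector bundles, and over a smooth projective curve in characteristic zero the tensor product of semistable bundles is semistable --- for instance because, after normalizing slopes to zero, the Narasimhan--Seshadri correspondence identifies polystable bundles with unitary representations, and a tensor product of unitary representations is unitary. Hence $(\pi^{\ast}(\shF \htensor \shG))\restr{C}$ is semistable, so the restriction of $\shE$ cannot have slope exceeding that of the whole; translating back through the $\alpha$-slope--degree dictionary yields $\mu_{\alpha}(\shE) \leq \mu_{\alpha}(\shF \htensor \shG)$, and the tensor product is $\alpha$-semistable.

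The main obstacle is precisely this restriction theorem for movable classes: one must guarantee both that $\alpha$-semistability survives restriction to the chosen curve and that the $\alpha$-slope matches the degree on that curve, uniformly enough to compare $\shF \htensor \shG$ with an arbitrary destabilizing subsheaf. Once this dictionary between $\alpha$-slopes and degrees on $C$ is in place, everything else is either formal or a classical fact about bundles on curves; for this reason I would lean on \cite{CPet,GKP} for the restriction step rather than reprove it.
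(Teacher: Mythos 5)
Your first paragraph (the slope formula via the codimension~$\geq 2$ support of the torsion) is correct, and it matches the parenthetical remark in the paper's proof. But the heart of your argument rests on a result that does not exist: neither \cite{CPet} nor \cite{GKP} contains a Mehta--Ramanathan-type restriction theorem for movable classes, and the absence of such a theorem is precisely the known difficulty of this theory. What those references prove is the tensor-product theorem itself, by \emph{analytic} means: Toma's appendix to \cite{CPet} replaces the Narasimhan--Seshadri argument you want to run on a curve by its higher-dimensional transcendental analogue (Hermite--Einstein-type metrics, via Donaldson--Uhlenbeck--Yau/Bando--Siu on modifications), exactly because one cannot reduce to curves in this setting. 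Accordingly, the paper's own proof is a two-line citation: the statement for the reflexive hull of $\shF \tensor \shG$ is \cite[Theorem~4.2 and Proposition~4.4]{GKP}, and one passes to $\shF \htensor \shG$ because the two sheaves agree outside a closed subset of codimension~$\geq 2$. So at its crux your proposal quotes a theorem that the cited literature deliberately avoids having to prove.

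Here is the concrete obstruction to your ``dictionary.'' Let $X$ be a surface, $\pi \colon X' \to X$ the blow-up of a point $p$ with exceptional curve $E$, let $A$ be ample on $X'$, and $\alpha = \pi_{\ast}A$, a movable class. The sheaf $\shF = \shf{I}_p \oplus \OX$ is $\alpha$-semistable of slope zero: every saturated rank-one subsheaf admits a nonzero map to $\OX$, hence has $c_1 \cdot \alpha \leq 0$ because $\alpha$ pairs nonnegatively with effective divisors. However, $\pi^{\ast}\shF$ modulo torsion is $\shO_{X'}(-E) \oplus \shO_{X'}$, whose restriction to any curve $C \in |mA|$ is \emph{unstable} (the trivial summand destabilizes, since $E \cdot A > 0$), and whose degree on $C$ is $-m(E \cdot A) \neq 0 = m\, c_1(\shF)\cdot\alpha$. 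The discrepancy $c_1\bigl(\pi^{\ast}\shF/(\pi^{\ast}\shF)_{\mathit{tor}}\bigr) - \pi^{\ast}c_1(\shF)$ is an exceptional divisor: it is invisible to $\alpha$ downstairs (it pushes forward to codimension two), but it has nonzero degree on every complete-intersection curve upstairs, and an ample complete-intersection curve cannot avoid a divisor. Thus both halves of your dictionary --- ``$\alpha$-slope equals degree on $C$'' and ``semistability survives pullback and restriction'' --- fail for sheaves that are not locally free, and the subsheaves $\shE \subseteq \shF \htensor \shG$ you must test are exactly of this kind, with singular loci unrelated to any modification presenting $\alpha$. (A lesser issue: the Jordan--H\"older reduction at the start is also not established in \cite{GKP}, which develops only Harder--Narasimhan theory for movable classes; but that step is dispensable anyway.)
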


\begin{proof}
For the reflexive hull of the tensor product, this is proved in
\cite[Theorem~4.2 and Proposition~4.4]{GKP}, based on analytic results by Toma
\cite[Appendix]{CPet}.  Since $\shF \htensor \shG$ and its reflexive hull are
isomorphic outside a closed subvariety of codimension $\geq 2$, the assertion
follows. (The formula for the $\alpha$-slope of $\shF \htensor \shG$ is of course
valid for arbitrary nonzero torsion-free coherent sheaves $\shF$ and $\shG$.)
\hfill $\Box$
\end{proof}

Similarly, the fact that $\shT_X(-\log \Delta)^{\tensor N}$
has a subsheaf with positive $\alpha$-slope implies, again by
\cite[Theorem~4.2 and Proposition~4.4]{GKP}, that $\shT_X(-\log \Delta)$ must also
contain a subsheaf with positive $\alpha$-slope. Let $\shFdel \subseteq \shT_X(-\log
\Delta)$ be the maximal $\alpha$-destabilizing subsheaf \cite[Corollary~2.24]{GKP}. 

\begin{lemma} \label{lem:shFdel}
$\shFdel$ is a saturated, $\alpha$-semistable subsheaf of $\shT_X(-\log \Delta)$, 
of positive $\alpha$-slope. Every subsheaf of
$\shT_X(-\log \Delta)/\shFdel$ has $\alpha$-slope less than $\mu_{\alpha}(\shFdel)$.
\end{lemma}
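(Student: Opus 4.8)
The plan is to recognize the four assertions as precisely the defining properties of the first step of the Harder--Narasimhan filtration, and to read each of them off from the extremal characterization that defines $\shFdel$. By construction (see \cite[Corollary~2.24]{GKP}), $\shFdel$ is the nonzero subsheaf of $\shT_X(-\log \Delta)$ whose $\alpha$-slope is the largest possible, say equal to $\mu_{\max}$, and which, among all subsheaves attaining this maximal slope, has the largest generic rank. I would take this lexicographic maximality as the starting point and verify the statements one at a time, quoting from \cite{GKP,CPet} only the basic facts of the theory: existence and uniqueness of the maximal destabilizer, additivity of $c_1$ and of the rank in short exact sequences, and the fact that a movable class pairs non-negatively with every effective divisor.

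First I would dispatch semistability and positivity directly from maximality of the slope. Any nonzero subsheaf $\shG \subseteq \shFdel$ is in particular a subsheaf of $\shT_X(-\log \Delta)$, so $\mu_{\alpha}(\shG) \leq \mu_{\max} = \mu_{\alpha}(\shFdel)$, which is exactly $\alpha$-semistability. For positivity, the discussion preceding the lemma produces a subsheaf of $\shT_X(-\log \Delta)$ with positive $\alpha$-slope, and since $\mu_{\alpha}(\shFdel) = \mu_{\max}$ dominates the slope of every subsheaf, it too is positive. The claim about the quotient is only slightly more work: given a nonzero subsheaf $\shG \subseteq \shT_X(-\log \Delta)/\shFdel$, I would pass to its preimage $\shH \subseteq \shT_X(-\log \Delta)$, so that $0 \to \shFdel \to \shH \to \shG \to 0$ is exact, and express $\mu_{\alpha}(\shH)$ as the weighted average of $\mu_{\alpha}(\shFdel)$ and $\mu_{\alpha}(\shG)$ with positive weights. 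If one had $\mu_{\alpha}(\shG) \geq \mu_{\alpha}(\shFdel)$, then $\mu_{\alpha}(\shH) \geq \mu_{\max}$, forcing $\mu_{\alpha}(\shH) = \mu_{\max}$ by maximality; but then $\rk \shH > \rk \shFdel$ would contradict the maximality of the rank of $\shFdel$ among subsheaves of maximal slope. Hence $\mu_{\alpha}(\shG) < \mu_{\alpha}(\shFdel)$.

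The one step I would treat with care, and which is the closest thing to an obstacle, is saturatedness. Let $\cl{\shFdel}$ denote the saturation of $\shFdel$ in $\shT_X(-\log \Delta)$. It has the same generic rank, and $c_1(\cl{\shFdel}) - c_1(\shFdel)$ is the class of an effective divisor, so $\mu_{\alpha}(\cl{\shFdel}) \geq \mu_{\alpha}(\shFdel)$ because $\alpha$ is movable. Maximality of the slope then forces equality, so $\cl{\shFdel}$ is a subsheaf of maximal slope, of the same rank as $\shFdel$, and containing it; by the uniqueness of the maximal destabilizing subsheaf one concludes $\cl{\shFdel} = \shFdel$. I would deliberately run the argument through this uniqueness rather than hoping for a strict inequality $(c_1(\cl{\shFdel}) - c_1(\shFdel)) \cdot \alpha > 0$, since a movable class can meet a nonzero effective divisor in degree zero and the naive numerical argument would then break down. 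Apart from this point, there is no genuine difficulty: the entire content of the lemma sits inside the theory of $\alpha$-(semi)stability for movable classes, which the excerpt is entitled to use freely.
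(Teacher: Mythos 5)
Your proposal is correct and follows essentially the same route as the paper, whose proof simply declares the lemma ``clear from the construction of the maximal destabilizing subsheaf'' in \cite{GKP} and identifies $\shFdel$ as the first step of the Harder--Narasimhan filtration; you have merely unpacked that citation, deriving each property from the extremal (maximal slope, then maximal rank) characterization, with appropriate care at the saturation step where uniqueness of the maximal destabilizer is the right tool rather than a strict slope inequality. The only cosmetic improvement would be to establish saturation \emph{before} the quotient claim, so that $\shT_X(-\log \Delta)/\shFdel$ is known to be torsion-free and every nonzero subsheaf $\shG$ automatically has positive rank, as your weighted-average argument requires.
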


\begin{proof}
This is clear from the construction of the maximal destabilizing subsheaf in
\cite[Corollary~2.4]{GKP}. Note that $\shFdel$ is the first step in the
Harder-Narasimhan filtration of $\shT_X(-\log \Delta)$, see
\cite[Corollary~2.26]{GKP}.
\hfill $\Box$
\end{proof}

Recall that we have an inclusion $\shT_X(-\log \Delta) \subseteq \shT_X$. We define
another coherent subsheaf $\shF \subseteq \shT_X$ as the saturation of $\shFdel$ in
$\shT_X$; then $\shT_X/\shF$ is torsion-free, and
\begin{equation} \label{eq:saturation}
        \shF \cap \shT_X(-\log \Delta) = \shFdel.
\end{equation}
We will see in a moment that $\shF$ is actually a (typically, singular) foliation on
$X$. Recall that, in general, a \define{foliation} on a smooth projective
variety is a saturated subsheaf $\shF \subseteq \shT_X$ that is closed under the Lie
bracket on $\shT_X$. From the Lie bracket, one constructs an $\OX$-linear mapping
\[
        N \colon \shF \htensor \shF \to \shT_X / \shF,
\]
called the \define{O'Neil tensor} of $\shF$; evidently, $\shF$ is a foliation if and
only if its O'Neil tensor vanishes.

\begin{lemma} \label{lem:foliation}
The O'Neil tensor
\[      
        N \colon \shF \htensor \shF \to \shT_X / \shF
\]
vanishes, and $\shF$ is therefore a foliation on $X$.
\end{lemma}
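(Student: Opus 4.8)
The plan is to show that the $\OX$-linear map $N$ vanishes by a slope argument that exploits the positivity of $\mu_{\alpha}(\shFdel)$, together with the Lie-bracket-closedness of $\shT_X(-\log \Delta)$. The work naturally splits into two stages: first I would prove that $N$ already vanishes on the smaller subsheaf $\shFdel \htensor \shFdel$, and then promote this to all of $\shF \htensor \shF$ using that $\shT_X/\shF$ is torsion-free.

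For the first stage, I would restrict $N$ to $\shFdel \htensor \shFdel$. Because $\shT_X(-\log \Delta)$ is closed under the Lie bracket, the bracket $\lie{u}{v}$ of two local sections $u, v$ of $\shFdel \subseteq \shT_X(-\log \Delta)$ again lies in $\shT_X(-\log \Delta)$. Reducing modulo $\shF$ and using the identity $\shF \cap \shT_X(-\log \Delta) = \shFdel$ from \eqref{eq:saturation}, the restricted morphism factors as
\[
        N' \colon \shFdel \htensor \shFdel \to \shT_X(-\log \Delta)/\shFdel \into \shT_X/\shF.
\]
Now the slopes decide the matter. By \theoremref{thm:tensor}, the sheaf $\shFdel \htensor \shFdel$ is $\alpha$-semistable of slope $2\mu_{\alpha}(\shFdel)$, so every nonzero torsion-free quotient of it has $\alpha$-slope at least $2\mu_{\alpha}(\shFdel)$; in particular this applies to $\im N'$, which is a subsheaf of the torsion-free sheaf $\shT_X(-\log \Delta)/\shFdel$. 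On the other hand, \lemmaref{lem:shFdel} bounds every subsheaf of $\shT_X(-\log \Delta)/\shFdel$ from above by $\mu_{\alpha}(\shFdel)$. If $N'$ were nonzero, its image would satisfy
\[
        2\mu_{\alpha}(\shFdel) \le \mu_{\alpha}(\im N') < \mu_{\alpha}(\shFdel),
\]
which is absurd since $\mu_{\alpha}(\shFdel) > 0$. Hence $N' = 0$, i.e.\ $\shFdel$ is closed under the Lie bracket.

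For the second stage, I would use that $\shFdel$ and its saturation $\shF$ coincide over $X \setminus \Delta$, where $\shT_X(-\log \Delta) = \shT_X$ and $\shFdel$ is already saturated in $\shT_X$. Consequently $N$ vanishes on $\shF \htensor \shF$ over $X \setminus \Delta$, so $\im N$ is supported on $\Delta$. But $\im N$ is a subsheaf of $\shT_X/\shF$, which is torsion-free because $\shF$ is saturated in $\shT_X$; a subsheaf of a torsion-free sheaf whose support is a proper closed subset must vanish. Therefore $N = 0$ on all of $X$, and $\shF$ is a foliation.

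The main obstacle is getting the first stage set up correctly: one must verify that the restriction of $N$ genuinely takes values in the subsheaf $\shT_X(-\log \Delta)/\shFdel$, rather than merely in $\shT_X/\shF$. This is exactly the point where the logarithmic condition and \eqref{eq:saturation} enter, and it is essential, since \lemmaref{lem:shFdel} supplies the crucial upper slope bound $\mu_{\alpha}(\shFdel)$ only for subsheaves of this smaller quotient. The positivity $\mu_{\alpha}(\shFdel) > 0$ is then what produces the gap $2\mu_{\alpha}(\shFdel) > \mu_{\alpha}(\shFdel)$ that closes the argument.
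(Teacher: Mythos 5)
Your proof is correct and takes essentially the same route as the paper: the first stage (the logarithmic O'Neil tensor $\shFdel \htensor \shFdel \to \shT_X(-\log \Delta)/\shFdel$ killed by the slope gap $2\mu_{\alpha}(\shFdel) > \mu_{\alpha}(\shFdel)$) is identical to the paper's key step. Your second stage, deducing $N=0$ from its vanishing on $X \setminus \Delta$ together with the torsion-freeness of $\shT_X/\shF$, is just a geometric rephrasing of the paper's argument that $N$ factors through the torsion cokernel of $\shFdel \htensor \shFdel \to \shF \htensor \shF$ and hence vanishes.
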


\begin{proof}
The Lie bracket of two sections of $\shT_X(-\log \Delta)$ is a
section of $\shT_X(-\log \Delta)$, and so we get a logarithmic O'Neil tensor
\[
        N_{\Delta} \colon \shFdel \htensor \shFdel \to \shT_X(-\log \Delta)/\shFdel.
\]
The key point is that $N_{\Delta} = 0$. Indeed, by \theoremref{thm:tensor}, the
tensor product $\shFdel \htensor \shFdel$, modulo torsion, is again
$\alpha$-semistable of slope 
\[
        \mu_{\alpha}(\shFdel \htensor \shFdel) = 2 \cdot \mu_{\alpha}(\shFdel) 
                > \mu_{\alpha}(\shFdel),
\]
which is strictly greater than the slope of any nonzero subsheaf of
$\shT_X(-\log \Delta)/\shFdel$ by \lemmaref{lem:shFdel}. This inequality among
slopes implies that $N_{\Delta} = 0$, see for instance \cite[Proposition~2.16 and
Corollary~2.17]{GKP}.

The O'Neil tensor $N$ and the logarithmic O'Neil tensor $N_{\Delta}$ are both induced
by the Lie bracket on $\shT_X$, and so we have the following commutative diagram:
\[
\begin{tikzcd}
\shFdel \htensor \shFdel \dar \rar{N_{\Delta}} & \shT_X(-\log \Delta) / \shFdel \dar[hook] \\
\shF \htensor \shF \rar{N} & \shT_X/\shF
\end{tikzcd}
\]
The vertical arrow on the right is injective by \eqref{eq:saturation}. Now
$N_{\Delta} = 0$ implies that $N$ factors through the cokernel of the vertical
arrow on the left; but the cokernel is a torsion sheaf, whereas $\shT_X/\shF$ is
torsion-free. The conclusion is that $N = 0$.
\hfill $\Box$
\end{proof}

The next step in the proof is to show that the foliation $\shF$ is actually
algebraic. This is a simple consequence of the powerful algebraicity theorem of
Campana and P\u{a}un \cite[Theorem~1.1]{CP}, which generalizes a well-known result by
Bogomolov and McQuillan \cite{BMQ} and Bost \cite[\S3.3]{Bost} from complete intersection
curves to movable classes. (See also the paper \cite{KST} by Kebekus, Sol\`a Conde,
and Toma.)

\begin{theorem} \label{thm:algebraic}
Let $X$ be a smooth projective variety over the complex numbers, and let $\shF
\subseteq \shT_X$ be a foliation. Suppose that there exists a
movable class $\alpha \in N_1(X)_{\RR}$, such that every nonzero quotient sheaf
of $\shF$ has positive $\alpha$-slope. Then $\shF$ is an algebraic foliation, and its
leaves are rationally connected.
\end{theorem}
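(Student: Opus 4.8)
The plan is to deduce the algebraicity of $\shF$ from the classical criterion of Bogomolov--McQuillan \cite{BMQ} and Bost \cite{Bost} for foliations that are ample along a curve, and to manufacture such a curve out of the movable class $\alpha$. The first observation is that the hypothesis — every nonzero quotient of $\shF$ has positive $\alpha$-slope — says precisely that the minimal slope $\mu_{\alpha}^{\min}(\shF)$ at the bottom of the Harder--Narasimhan filtration is strictly positive. My first step would be to convert this numerical positivity into honest ampleness along a curve. Using the Mehta--Ramanathan-type restriction theorem for movable classes from \cite{CPet,GKP}, I would choose a general member $C$ of a covering family of curves adapted to $\alpha$ (classes of the form $H_1 \cap \dotsm \cap H_{n-1}$ on a suitable modification, approximating a multiple of $\alpha$), arranged so that $C$ avoids the singular locus of $\shF$ — so that $\shF \restr{C}$ is a genuine vector bundle on the smooth curve $C$ — and so that the Harder--Narasimhan filtration of $\shF$ restricts to that of $\shF \restr{C}$. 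Then $\mu^{\min}(\shF \restr{C}) > 0$, every quotient bundle of $\shF \restr{C}$ has positive degree, and by Hartshorne's criterion $\shF \restr{C}$ is an ample vector bundle.

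With this ampleness in hand, the second step is to invoke the algebraicity criterion in the clean form given by Kebekus, Sol\`a Conde, and Toma \cite{KST}: if $\shF \subseteq \shT_X$ is a foliation and $C$ is a general member of a covering family with $\shF \restr{C}$ ample, then the leaf of $\shF$ through a general point of $C$ is an algebraic subvariety. Because $C$ moves in a family that covers $X$, the leaves through general points of $X$ are all algebraic, and hence $\shF$ is an algebraic foliation.

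The third step is rational connectedness of the leaves, and here the ampleness of $\shF \restr{C} \subseteq \shT_X \restr{C}$ is used once more: deforming $C$ with a point fixed and applying bend-and-break produces rational curves through a general point that are tangent to $\shF$, and the positivity of $\shF$ along these curves forces the closure of a general leaf to be rationally connected. This is exactly the conclusion packaged into the Bogomolov--McQuillan argument as presented in \cite{KST}, so steps two and three are really two halves of a single criterion.

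The main obstacle is the algebraicity criterion itself. Whether one follows Bost's Arakelov-theoretic route — that a formal leaf along $C$ with sufficient positivity is algebraic — or the Mori-theoretic argument of Kebekus--Sol\`a Conde--Toma, this is the genuinely deep input, and essentially all of the difficulty resides there. The other delicate point is the reduction in the first step: one must arrange that the chosen curve \emph{simultaneously} realizes the slope positivity of $\shF$ (so that the restriction theorem yields ampleness) and belongs to a family covering $X$ (so that algebraicity of one leaf upgrades to algebraicity of the whole foliation). It is exactly this combination, made possible by passing from complete-intersection curves to movable classes, that distinguishes the theorem of Campana and P\u{a}un from the earlier statements of Bogomolov--McQuillan and Bost.
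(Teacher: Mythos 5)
A preliminary remark on the comparison: the paper does not prove this statement at all. It is imported as a black box, namely as the ``powerful algebraicity theorem'' \cite[Theorem~1.1]{CP}, and the paper explicitly describes its content as generalizing the results of Bogomolov--McQuillan \cite{BMQ}, Bost \cite[\S3.3]{Bost}, and Kebekus--Sol\`a Conde--Toma \cite{KST} \emph{from complete intersection curves to movable classes}. Your proposal must therefore be judged as a proof sketch in its own right, and as such it contains a genuine gap.

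The gap is your first step. There is no ``Mehta--Ramanathan-type restriction theorem for movable classes'' in \cite{CPet} or \cite{GKP}: neither paper states or proves such a result, and the absence of restriction theorems in this setting is a recognized, essential difficulty of the theory --- it is the very reason why the tensor-product theorem (\theoremref{thm:tensor} above) is established there by analytic methods going back to Toma's appendix in \cite{CPet}, rather than by the classical route of restricting to curves. The obstruction is not merely technical. A movable class $\alpha$ is in general only a limit of classes $(\mu_m)_{\ast}\bigl(H_1^{(m)} \cdots H_{n-1}^{(m)}\bigr)$ pushed forward from modifications $\mu_m \colon X_m \to X$; it need not be proportional to the class of any covering family of curves on $X$. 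Trying to repair the step by approximation runs into two problems. First, $\beta \mapsto \mu_{\beta}^{\min}(\shF)$ is an infimum of linear functions of $\beta$, one for each nonzero quotient of $\shF$, hence only upper semicontinuous; so $\mu_{\alpha}^{\min}(\shF) > 0$ gives no positivity for the approximating classes without a uniform boundedness argument over all quotients --- and $\alpha$ is moreover allowed to lie on the boundary of the movable cone. Second, the complete-intersection curves realizing the approximating classes live on the modifications $X_m$, where they necessarily meet every divisor, in particular any exceptional divisor lying over the singular locus of $\shF$ (that locus has codimension $\geq 2$ in $X$, but its preimage in $X_m$ can contain divisors); so one cannot arrange the resulting curves in $X$ to stay inside the locus where $\shF$ is a subbundle of $\shT_X$, as the criterion of \cite{KST} requires. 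What your argument does prove is the case in which $\alpha$ is the class of a covering family of curves avoiding the singular locus of $\shF$ --- that is, essentially the already-known theorem of \cite{BMQ} and \cite{KST}. The entire content of \cite[Theorem~1.1]{CP} is the passage from numerical positivity against a movable class to honest ampleness along actual algebraic curves; your closing paragraph correctly identifies this as the delicate point, but then assumes it rather than proving it.
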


To apply this in our setting, we observe that every quotient sheaf of $\shF$ is, at
least over the open subset $X \setminus \Delta$, also a quotient sheaf of $\shFdel$,
because $\shF$ and $\shFdel$ agree outside the divisor $\Delta$. As $\shFdel$ is
$\alpha$-semistable with $\mu_{\alpha}(\shF) > 0$, it follows easily that every
quotient sheaf of $\shF$ has positive $\alpha$-slope. We can now invoke
\theoremref{thm:algebraic} and conclude that the foliation $\shF$ is algebraic. In
other words \cite[\S4]{CP}, there exists a dominant rational mapping
\[
        p \colon X \dashrightarrow Z
\]
to a smooth projective variety $Z$, such that 
\[
        \shF = \ker \bigl( \mathit{dp} \colon \shT_X \to \pu \shT_Z \bigr) 
\]
outside a subset of codimension $\geq 2$. More precisely, let us follow
\cite[Construction~2.29]{CKT} and denote by the symbol $\shT_{X/Z}$ the unique
reflexive sheaf on $X$ that agrees with $\ker \bigl( \mathit{dp} \colon \shT_X \to
\pu \shT_Z \bigr)$ on the big open subset where $p$ is a morphism. Using this
notation, the algebraicity of $\shF$ may be expressed as
\begin{equation} \label{eq:algebraic}
        \shF = \shT_{X/Z};
\end{equation}
indeed, $\shF$ is reflexive, due to the fact that $\shT_X/\shF$ is torsion-free.

\begin{remark}
{\rm \theoremref{thm:algebraic} also says that the fibers of $p$ are rationally connected,
but we are not going to make any use of this extra information. This means that the
proof of \theoremref{thm:main} only uses characteristic zero methods.}
\end{remark}

\section{Pseudo-effectivity}

Let us first convince ourselves that $Z$ cannot be a point. This will later allow us
to argue by induction on the dimension, because the general fiber of $p$ has
dimension less than $\dim X$.

\begin{lemma} \label{lem:dimZ}
With notation as above, we must have $\dim Z \geq 1$.
\end{lemma}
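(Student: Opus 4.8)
The plan is to argue by contradiction: I would assume that $Z$ is a point and show that this contradicts the presence of a big line bundle inside a tensor power of $\OmX^1(\log \Delta)$. First I observe that if $\dim Z = 0$, then the dominant rational map $p$ is constant, so $\mathit{dp} = 0$ and hence $\shF = \shT_{X/Z} = \shT_X$. Feeding this into the saturation identity \eqref{eq:saturation} gives
\[
        \shFdel = \shF \cap \shT_X(-\log \Delta) = \shT_X \cap \shT_X(-\log \Delta)
        = \shT_X(-\log \Delta).
\]
In other words, the entire logarithmic tangent bundle would coincide with the maximal $\alpha$-destabilizing subsheaf $\shFdel$, so by \lemmaref{lem:shFdel} the bundle $\shT_X(-\log \Delta)$ is $\alpha$-semistable of positive $\alpha$-slope.

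Next I would propagate this semistability to the relevant tensor power. Recall that, by the hypothesis of \theoremref{thm:main} together with \lemmaref{lem:line-bundle}, there is an integer $m \geq 1$ and a big line bundle $L \subseteq \OmX^1(\log \Delta)^{\tensor m}$. Applying \theoremref{thm:tensor} inductively, the locally free sheaf $\shT_X(-\log \Delta)^{\tensor m}$ is $\alpha$-semistable with $\alpha$-slope equal to $m \cdot \mu_{\alpha}\bigl( \shT_X(-\log \Delta) \bigr) > 0$. Since $\OmX^1(\log \Delta)^{\tensor m}$ is its dual, it is likewise $\alpha$-semistable, now with negative $\alpha$-slope
\[
        \mu_{\alpha}\bigl( \OmX^1(\log \Delta)^{\tensor m} \bigr)
        = - m \cdot \mu_{\alpha}\bigl( \shT_X(-\log \Delta) \bigr) < 0.
\]

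The contradiction then comes from comparing slopes. Because $L$ is big and $\alpha$ is a nonzero movable class (it is nonzero since $c_1(\shQ) \cdot \alpha < 0$), the duality between the pseudo-effective cone and the movable cone \cite{BDPP} forces $\mu_{\alpha}(L) = c_1(L) \cdot \alpha > 0$; concretely, writing $c_1(L)$ numerically as $A + E$ with $A$ ample and $E$ effective gives $A \cdot \alpha > 0$ and $E \cdot \alpha \geq 0$. But $L$ is a subsheaf of the $\alpha$-semistable bundle $\OmX^1(\log \Delta)^{\tensor m}$, so semistability demands $\mu_{\alpha}(L) \leq \mu_{\alpha}\bigl( \OmX^1(\log \Delta)^{\tensor m} \bigr) < 0$, a contradiction. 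Hence $\dim Z \geq 1$. The argument is short; the only points that require care are the strict positivity $c_1(L) \cdot \alpha > 0$ (that a big class pairs strictly positively with every nonzero movable class) and the bookkeeping that dualizing a locally free $\alpha$-semistable sheaf preserves semistability while negating the slope. I expect these two routine facts, rather than any genuine difficulty, to be the only obstacle.
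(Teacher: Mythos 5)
Your proof is correct and follows essentially the same route as the paper: from $\dim Z = 0$ you deduce $\shFdel = \shT_X(-\log \Delta)$, propagate $\alpha$-semistability of positive slope through tensor powers via \theoremref{thm:tensor}, and contradict the positivity of the $\alpha$-slope of a big subsheaf. The only cosmetic differences are that you first reduce to a big line bundle via \lemmaref{lem:line-bundle} and make the dualization step explicit, whereas the paper argues directly with the subsheaf of big determinant; both hinge on the same slope comparison.
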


\begin{proof}
If $\dim Z = 0$, then $\shF = \shT_X$ and $\shFdel = \shT_X(-\log \Delta)$, and
consequently, the logarithmic tangent bundle $\shT_X(-\log \Delta)$ is
$\alpha$-semistable of positive slope. Since the tensor product of
$\alpha$-semistable sheaves remains $\alpha$-semistable \cite[Proposition~4.4]{GKP},
this means that any tensor power of $\OmX^1(\log \Delta)$ is $\alpha$-semistable of
negative slope. But that contradicts the hypothesis of \theoremref{thm:main}, namely
that some tensor power of $\OmX^1(\log \Delta)$ contains a subsheaf with big
determinant, because the $\alpha$-slope of such a subsheaf is obviously positive.
\hfill $\Box$
\end{proof}

The only properties of $\shFdel$ that we are still going to use in the proof of
\theoremref{thm:main} are the identity in \eqref{eq:saturation}, and the fact that
$c_1(\shFdel) \cdot \alpha > 0$ for a movable class $\alpha \in N_1(X)_{\RR}$. In
return, we are allowed to assume that $p \colon X \to Z$ is a morphism.

\begin{lemma} \label{lem:morphism}
Without loss of generality, $p \colon X \to Z$ is a morphism.
\end{lemma}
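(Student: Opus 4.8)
The plan is to make $p$ into a morphism by resolving its indeterminacy, and then to transport the two properties that the remainder of the proof relies on. Concretely, I would choose a log resolution $\mu \colon \Xtl \to X$ of the pair $(X, \Delta)$ together with the indeterminacy locus of $p$, so that $\ptl = p \circ \mu \colon \Xtl \to Z$ is a morphism and $\Deltatl$, the reduced total transform of $\Delta$, is a normal crossing divisor. Since $\Delta$ is already a normal crossing divisor and the indeterminacy locus of $p$ has codimension $\geq 2$, the morphism $\mu$ is an isomorphism over a big open subset and all its exceptional divisors $E_i$ are contracted to loci of codimension $\geq 2$. On $\Xtl$ I then set $\shFtl = \shT_{\Xtl/Z}$, the reflexive kernel of $d\ptl$, which is a foliation with $\shT_{\Xtl}/\shFtl$ torsion-free, exactly as in \eqref{eq:algebraic}, and I define $\shG = \shFtl \cap \shT_{\Xtl}(-\log \Deltatl)$. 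By construction the analogue of \eqref{eq:saturation} holds, $\shG$ is saturated in $\shT_{\Xtl}(-\log \Deltatl)$, and $\dim Z \geq 1$ is unchanged, so \lemmaref{lem:dimZ} still applies. The point of the lemma is then that these data $(\Xtl, \Deltatl, \shG, \ptl)$ carry all the information the sequel uses, with $\ptl$ now a morphism.

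The main work, and the step I expect to be the real obstacle, is to produce a movable class $\alphatl$ on $\Xtl$ with $c_1(\shG) \cdot \alphatl > 0$. First I would compare determinants. Restricting the logarithmic differential $d\mu \colon \shT_{\Xtl}(-\log \Deltatl) \to \muu \shT_X(-\log \Delta)$ to $\shG$ and composing with $\muu$ of the torsion-free quotient $\shT_X(-\log \Delta)/\shFdel$, the resulting map vanishes on the open set where $\mu$ is an isomorphism; working in codimension one, where $\muu$ of a torsion-free sheaf is locally free, this forces $d\mu(\shG) \subseteq \muu \shFdel$, and taking determinants yields a nonzero map $\det \shG \to \muu \det \shFdel$. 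Hence $c_1(\shG) = \muu c_1(\shFdel) - D$ for an effective divisor $D$ supported on the $\mu$-exceptional locus, so $D \cdot \beta = 0$ for any $\beta$ with $\beta \cdot E_i = 0$ for all $i$.

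It remains to choose the right lift of $\alpha$. Because $\mu$ is a composition of blow-ups with smooth centers, one has the standard decomposition $N^1(\Xtl)_{\RR} = \muu N^1(X)_{\RR} \oplus \bigoplus_i \RR E_i$; dually, $\mul$ restricts to an isomorphism from $\{\, \beta \in N_1(\Xtl)_{\RR} \mid \beta \cdot E_i = 0 \text{ for all } i \,\}$ onto $N_1(X)_{\RR}$. Let $\alphatl$ be the unique class with $\alphatl \cdot E_i = 0$ and $\mul \alphatl = \alpha$. I claim $\alphatl$ is movable: writing any divisor as $D' = \muu \mul D' + \sum_i c_i E_i$ and using the projection formula gives $\alphatl \cdot D' = \alpha \cdot \mul D'$, which is $\geq 0$ whenever $D'$ is pseudo-effective, since then $\mul D'$ is pseudo-effective and $\alpha$ is movable; so $\alphatl$ lies in the movable cone by the duality of \cite[Theorem~2.2]{BDPP}. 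Then $c_1(\shG) \cdot \alphatl = \bigl( \muu c_1(\shFdel) - D \bigr) \cdot \alphatl = c_1(\shFdel) \cdot \mul \alphatl = c_1(\shFdel) \cdot \alpha > 0$, using $D \cdot \alphatl = 0$.

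This establishes the two properties singled out before the lemma for the new data, with $\ptl$ a morphism, so for the remainder of the proof of \theoremref{thm:main} we may assume that $p \colon X \to Z$ is a morphism. The delicate point throughout is the positivity transport: the determinant comparison produces an exceptional \emph{defect} $D$ of the \emph{wrong} sign, so one cannot merely push $\alpha$ forward and must instead lift it to the hyperplane orthogonal to the exceptional divisors, where the defect becomes invisible; verifying that this orthogonal lift is still movable is exactly what makes the BDPP duality indispensable here.
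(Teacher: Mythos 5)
Your proposal follows essentially the same route as the paper: resolve the graph so that $\ptl$ becomes a morphism, replace $\shFdel$ by $\shG = \shT_{\Xtl/Z} \cap \shT_{\Xtl}(-\log \Deltatl)$, and transport the positivity by intersecting with the numerical pullback $\alphatl$ of $\alpha$; your BDPP-duality argument that $\alphatl$ is movable is exactly the justification behind the paper's assertion that the pulled-back class ``is of course still movable''. However, the step you yourself single out as the main work is flawed as written. You argue that the composite $\shG \to \muu\bigl(\shT_X(-\log\Delta)/\shFdel\bigr)$ vanishes ``in codimension one, where $\muu$ of a torsion-free sheaf is locally free''. The locus where that pullback is locally free is $\Xtl \setminus \mu^{-1}(W)$, with $W \subseteq X$ the non-locally-free locus of $\shT_X(-\log\Delta)/\shFdel$; although $\codim_X W \geq 2$, the preimage $\mu^{-1}(W)$ can contain entire $\mu$-exceptional divisors, along which $\muu\bigl(\shT_X(-\log\Delta)/\shFdel\bigr)$ may have torsion --- equivalently, the image of $\muu\shFdel$ in $\muu\shT_X(-\log\Delta)$ need not be saturated there. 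So generic vanishing of the composite does not force $d\mu(\shG) \subseteq \muu\shFdel$ in codimension one, and the effectivity of $D$ is not established. The good news is that you never needed it: since $\alphatl \cdot E_i = 0$ for every exceptional divisor, it suffices that $c_1(\shG) - \muu c_1(\shFdel)$ be \emph{some} combination of the $E_i$, and that is automatic, because $\det\shG$ and $\muu\det\shFdel$ are isomorphic over $\Xtl \setminus \bigcup_i E_i$ (there $\shG$ coincides with $\shFdel$ by \eqref{eq:saturation}) and the kernel of $\Pic(\Xtl) \to \Pic\bigl(\Xtl \setminus \bigcup_i E_i\bigr)$ is generated by the $E_i$. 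Your closing moral is therefore backwards: once you use the orthogonal lift, the sign of the exceptional defect is irrelevant and no delicate determinant comparison is required. (The paper's own mechanism is the mirror image of yours: $\shG$ is reflexive, its pushforward agrees with the reflexive sheaf $\shFdel$ over a big open subset of $X$, hence $\mul\shG \cong \shFdel$, and then one intersects with $\alphatl$.)

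There is also an omission in the bookkeeping of what ``without loss of generality'' must cover. \propositionref{prop:fiber} is later applied to the \emph{new} pair, so one must also check that some tensor power of $\Omega_{\Xtl}^1(\log \Deltatl)$ still contains a subsheaf with big determinant. The paper does this explicitly: the natural map $\muu \OmX^1(\log\Delta) \to \Omega_{\Xtl}^1(\log\Deltatl)$ is generically an isomorphism, so composing it with the pullback of $L \into \OmX^1(\log\Delta)^{\tensor N}$ gives a nonzero map from the big line bundle $\muu L$ into $\Omega_{\Xtl}^1(\log\Deltatl)^{\tensor N}$. This is easy, but it is part of the content of the lemma, and your list of transported properties (\eqref{eq:saturation}, $\dim Z \geq 1$, and the positivity $c_1(\shG)\cdot\alphatl > 0$) does not include it.
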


\begin{proof}
Choose a birational morphism $f \colon \Xtl \to X$, for example by resolving the
singularities of the closure of the graph of $p \colon X \dashrightarrow Z$
inside $X \times Z$, with the following properties: the rational mapping $p \circ f$
extends to a morphism $\ptl \colon \Xtl \to Z$; both $K_{\Xtl/X}$ and $\ptl^{\ast}
\Delta$ are normal crossing divisors; and $f$ is an isomorphism over the open subset
where $p$ is already a morphism. 

Let $\Deltatl$ be the reduced normal crossing divisor whose support is equal to the
preimage of $\Delta$ in $\Xtl$. Then
\[
        \Omega_{\Xtl}^1(\log \Deltatl) \cong \ptl^{\ast} \OmX^1(\log \Delta),
\]
and since the pullback of a big line bundle by $\ptl$ stays big, it is still true
that some tensor power of $\Omega_{\Xtl}^1(\log \Deltatl)$ contains a big line
bundle as a subsheaf. Now define
\[
        \shFtl = \shT_{\Xtl/Z} = 
                \ker \bigl( \ptl^{\ast} \colon \shT_{\Xtl} \to \ptl^{\ast} \shT_Z \bigr),
\]
which is a saturated subsheaf of $\shT_{\Xtl}$. The intersection
\[
        \shFtl \cap \shT_{\Xtl}(-\log \Deltatl)
\]
is a saturated (and hence reflexive) subsheaf of $\shT_{\Xtl}(-\log \Deltatl)$, whose
pushforward to $X$ is isomorphic to $\shFdel$, by \eqref{eq:saturation} and the fact
that $\shFdel$ is reflexive. Consequently,
\[
        c_1 \Bigl( \shFtl \cap \shT_{\Xtl}(-\log \Deltatl) \Bigr) \cdot \alphatl
                = c_1(\shFdel) \cdot \alpha > 0,
\]
where the class $\alphatl = \ptl^{\ast} \alpha \in N_1(\Xtl)_{\RR}$ is of course still
movable. Nothing essential is therefore changed if we replace the rational mapping $p
\colon X \dashrightarrow Z$ by the morphism $\ptl \colon \Xtl \to Z$; the divisor
$\Delta \subseteq X$ by $\Deltatl \subseteq \Xtl$; the sheaf
$\shFdel$ by the intersection
\[
        \shT_{\Xtl/Z} \cap \shT_{\Xtl}(-\log \Deltatl) \subseteq \shT_{\Xtl}
\]
and the movable class $\alpha \in N_1(X)_{\RR}$ by its pullback $\alphatl =
\ptl^{\ast} \alpha$.
\hfill $\Box$
\end{proof}

Let $R(p)$ denote the ramification divisor of the morphism $p \colon X \to Z$; see
\cite[Definition~2.16]{CKT} for the precise definition. Recall from
\cite[Lemma~2.31]{CKT} the following formula for the first Chern class of our
foliation $\shF \subseteq \shT_X$, in $N^1(X)_{\RR}$:
\begin{equation} \label{eq:CKT}
c_1(\shF) = c_1(\shT_{X/Z}) = - K_{X/Z} + R(p)
\end{equation}
Computing the first Chern class of $\shFdel$ is a little tricky
\cite[Proposition~5.1]{CP}, but at least we can use the fact that $\shF =
\shT_{X/Z}$ to estimate the difference 
\[
        c_1(\shF) - c_1(\shFdel) = c_1(\shF/\shFdel).
\]
Recall that the \define{horizontal part} $\Deltahor \subseteq \Delta$ is the union of
all irreducible components of $\Delta$ that map onto $Z$; evidently, $\Deltahor$ is
again a reduced divisor on $X$ with at worst normal crossing singularities.

\begin{lemma} \label{lem:difference}
The class $c_1(\shF) - c_1(\shFdel) - \Deltahor$ is effective.
\end{lemma}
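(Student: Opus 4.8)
The plan is to compute the first Chern class of the torsion sheaf $\shF/\shFdel$ from its generic behaviour along the components of $\Delta$. Since $c_1(\shF) - c_1(\shFdel) = c_1(\shF/\shFdel)$, I must show that $c_1(\shF/\shFdel) - \Deltahor$ is effective.

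First I would exploit the identity $\shFdel = \shF \cap \shT_X(-\log \Delta)$ from \eqref{eq:saturation}. Restricting the quotient map $\shT_X \onto \shT_X/\shT_X(-\log \Delta)$ to the subsheaf $\shF \subseteq \shT_X$, its kernel is exactly $\shF \cap \shT_X(-\log \Delta) = \shFdel$, so there is an injection $\shF/\shFdel \into \shT_X/\shT_X(-\log \Delta)$. The target is a torsion sheaf supported on $\Delta = \sum_i \Delta_i$; at the generic point $\eta_i$ of a component $\Delta_i$ only $\Delta_i$ passes through, the local ring $\shO_{X,\eta_i}$ is a discrete valuation ring, and the cokernel of $\shT_X(-\log \Delta) \into \shT_X$ localizes to the normal space of $\Delta_i$, a module of length one reached by the residue map. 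Hence $\shF/\shFdel$ is torsion with $c_1(\shF/\shFdel) = \sum_i \ell_i [\Delta_i]$, where $\ell_i$ is the length of $(\shF/\shFdel)_{\eta_i}$; the injection forces $\ell_i \leq 1$, with $\ell_i = 1$ precisely when the residue of $\shF$ along $\Delta_i$ is nonzero at $\eta_i$.

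It then remains to prove that $\ell_i = 1$ for every horizontal component $\Delta_i$. Here I would use that $p \colon X \to Z$ is a morphism by \lemmaref{lem:morphism} and that $\shF = \shT_{X/Z}$ by \eqref{eq:algebraic}. For horizontal $\Delta_i$ the restriction $p|_{\Delta_i} \colon \Delta_i \to Z$ is dominant; since we are in characteristic zero, generic smoothness shows that $p|_{\Delta_i}$ is smooth at $\eta_i$, so $dp$ already has maximal rank $\dim Z$ on $T_{\eta_i} \Delta_i$, and therefore on all of $T_{\eta_i} X$. Thus $p$ is smooth at $\eta_i$, and near $\eta_i$ the foliation $\shF = \shT_{X/Z}$ coincides with the locally free sheaf $\ker\bigl( dp \colon \shT_X \to \pu \shT_Z \bigr)$. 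The residue of $\shF$ along $\Delta_i$ vanishes at $\eta_i$ if and only if every fibre direction is tangent to $\Delta_i$ there, that is, if and only if $\Delta_i$ is a union of fibres of $p$ near $\eta_i$ --- which would make $\Delta_i$ vertical, contrary to assumption. Hence $\ell_i = 1$ for horizontal $\Delta_i$, and $c_1(\shF/\shFdel) = \Deltahor + \sum_{\Delta_i \text{ vertical}} \ell_i [\Delta_i] \geq \Deltahor$, as required.

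The main obstacle is this last transversality step: one has to know that, at the generic point of a horizontal component, $\shF$ genuinely agrees with $\ker(dp)$ rather than only with its reflexive hull, so that a fibre direction crossing $\Delta_i$ transversally yields a nonzero residue. The reduction via generic smoothness in characteristic zero --- which forces $p$ to be smooth at $\eta_i$ --- is exactly what secures this; the remaining content, that the length of $\shF/\shFdel$ at each $\eta_i$ is at most one and equals one along $\Deltahor$, is then elementary.
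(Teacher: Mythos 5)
Your proof is correct and takes essentially the same route as the paper's: both use \eqref{eq:saturation} to embed $\shF/\shFdel$ into $\shT_X \big/ \shT_X(-\log \Delta)$, observe that the coefficient of $c_1(\shF/\shFdel)$ along each component of $\Delta$ is $0$ or $1$, and then show the coefficient equals $1$ along every horizontal component because $p$ is smooth at its generic point. The only difference is presentational: the paper exhibits the fibre direction transverse to the component in explicit local coordinates, whereas you phrase the same transversality via the residue map and justify the smoothness of $p$ at the generic point by generic smoothness of $p|_{\Delta_i}$ (a point the paper asserts without proof).
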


\begin{proof}
It is easy to see from \eqref{eq:saturation} that we have an inclusion of sheaves
\[
        \shF/\shFdel \into \shT_X \big/ \shT_X(-\log \Delta).
\]
The sheaf on the right-hand side is supported on the divisor $\Delta$, and a brief
computation shows that
\[
        \shT_X \big/ \shT_X(-\log \Delta) \cong
                \bigoplus_{D \subseteq \Delta} \mathscr{N}_{D|X}
\]
is isomorphic to the direct sum of the normal bundles of the irreducible components
of $\Delta$. The rank of $\shF/\shFdel$ at the generic point of $D$ is thus either $0$
or $1$, and 
\[
        c_1(\shF/\shFdel) = \sum_{D \subseteq \Delta} a_D D,
\]
where $a_D = 0$ if $\shF = \shFdel$ at the generic point of $D$, and $a_D = 1$
otherwise. To prove that $c_1(\shF/\shFdel) - \Deltahor$ is effective, we only have
to argue that $\shF \neq \shFdel$ at the generic point of
each irreducible component of $\Deltahor$. This is a consequence of the fact that
$\shF = \shT_{X/Z}$, as we now explain.

Fix an irreducible component $D$ of the horizontal part $\Deltahor$. At the generic
point of $D$, the morphism $p \colon X \to Z$ is smooth. After choosing suitable
local coordinates $x_1, \dotsc, x_n$ in a neighborhood of a sufficiently general
point of $D$, we may therefore assume that $p$
is locally given by
\[
        p(x_1, \dotsc, x_n) = (x_1, \dotsc, x_d),
\]
where $d = \dim Z$, and that the divisor $\Delta$ is defined by the equation $x_n =
0$. In these local coordinates, $\shF = \shT_{X/Z}$ is the subbundle of $\shT_X$ spanned by
\begin{align*}
        \frac{\partial}{\partial x_n}, &\frac{\partial}{\partial x_{n-1}}, \dotsc,
                \frac{\partial}{\partial x_{d+1}}. \\
\intertext{On the other hand, the subsheaf $\shT_X(-\log \Delta)$ is spanned by the vector
fields}
        x_n \frac{\partial}{\partial x_n}, &\frac{\partial}{\partial x_{n-1}}, \dotsc,
                \frac{\partial}{\partial x_{d+1}}, \dotsc, \frac{\partial}{\partial x_1},
\end{align*}
and so it is clear from \eqref{eq:saturation} that $\shF \neq \shFdel$ in a
neighborhood of the given point.
\hfill $\Box$
\end{proof}

From \lemmaref{lem:difference}, we draw the conclusion that
\begin{equation} \label{eq:shFdel}
        - \bigl( K_{X/Z} + \Deltahor - R(p) \bigr) \cdot \alpha 
                = \bigl( c_1(\shF) - \Deltahor \bigr) \cdot \alpha
                \geq c_1(\shFdel) \cdot \alpha > 0,
\end{equation}
where $\alpha \in N_1(X)_{\RR}$ is the movable class from above. We will therefore
reach the desired contradiction if we manage to prove that the divisor class $K_{X/Z}
+ \Deltahor - R(p)$ is pseudo-effective. According to \cite[Theorem~3.3]{CP} or to
\cite[Theorem~7.1]{CKT}, it is actually enough to check that $K_F + \Delta_F$ is
pseudo-effective for a general fiber $F$ of the morphism $p$; and we can prove, by
induction on the dimension, that $K_F + \Delta_F$ is not only pseudo-effective, but
even big. The results that we use here are slight improvements of
\cite[Theorem~4.13]{Campana}, which is itself a generalization of Viehweg's weak
positivity theorem.

\section{Induction on the dimension}

In this section, we use induction on the dimension to finish the proof of
\theoremref{thm:main} and \theoremref{thm:CP}. 

\begin{proposition} \label{prop:fiber}
Suppose that \theoremref{thm:CP} is true in dimension less than $\dim X$. If some
tensor power of $\OmX^1(\log \Delta)$ contains a subsheaf with big determinant, then
$K_{X/Z} + \Deltahor$ is pseudo-effective.
\end{proposition}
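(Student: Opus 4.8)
The plan is to reduce, via the weak positivity theorem quoted above, to the bigness of $K_F + \Delta_F$ on a general fiber, and to obtain that bigness by applying \theoremref{thm:CP} in lower dimension. By \lemmaref{lem:line-bundle} I may assume that $\OmX^1(\log \Delta)^{\tensor N}$ contains a big line bundle $L$ as a subsheaf, for some $N \geq 1$. Since $\dim Z \geq 1$ by \lemmaref{lem:dimZ}, a general fiber $F$ of $p \colon X \to Z$ satisfies $\dim F < \dim X$, so \theoremref{thm:CP} is available for the pair $(F, \Delta_F)$, where $\Delta_F = \Delta\restr{F}$ is a reduced normal crossing divisor. According to \cite[Theorem~3.3]{CP} or \cite[Theorem~7.1]{CKT} it suffices to check that $K_F + \Delta_F$ is pseudo-effective for general $F$; I will verify that $(F, \Delta_F)$ satisfies the hypothesis of \theoremref{thm:CP} and conclude by induction that $K_F + \Delta_F$ is in fact big.

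The crux is to transfer the bigness of $L$ to the fiber. First, $L\restr{F}$ is big on a general fiber: writing $c_1(L) = A + E$ with $A$ ample and $E$ effective by Kodaira's lemma, the general fiber $F$ is not contained in $\Supp E$, so $c_1(L)\restr{F} = A\restr{F} + E\restr{F}$ is the sum of an ample and an effective class, hence big. Next, over the locus where $p$ is smooth and $\Delta$ is relatively normal crossing --- which contains the general fiber --- the relative cotangent sequence restricts on $F$ to
\[
        0 \to \shO_F^{\oplus d} \to \OmX^1(\log \Delta)\restr{F} \to \Omega_F^1(\log \Delta_F) \to 0,
\]
where $d = \dim Z$, since $\pu \Omega_Z^1\restr{F}$ is trivial and $\Omega_{X/Z}^1(\log \Deltahor)\restr{F} \cong \Omega_F^1(\log \Delta_F)$. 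Its $N$-th tensor power carries a filtration whose graded pieces are direct sums of copies of $\Omega_F^1(\log \Delta_F)^{\tensor k}$, for $0 \leq k \leq N$. Let $k_0$ be the largest $k$ for which the line subsheaf $L\restr{F}$ has nonzero image in the graded piece built from $\Omega_F^1(\log \Delta_F)^{\tensor k}$; projecting to a single summand gives a nonzero map $L\restr{F} \to \Omega_F^1(\log \Delta_F)^{\tensor k_0}$. Here $k_0 \geq 1$, for otherwise $L\restr{F}$ would embed into a trivial bundle $\shO_F^{\oplus m}$ and so could not be big. The image of this map is a rank-one subsheaf of $\Omega_F^1(\log \Delta_F)^{\tensor k_0}$ whose first Chern class is at least $c_1(L)\restr{F}$, and hence big; thus some tensor power of $\Omega_F^1(\log \Delta_F)$ contains a subsheaf with big determinant.

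By the inductive hypothesis, \theoremref{thm:CP} applied to $(F, \Delta_F)$ now shows that $K_F + \Delta_F$ is big, in particular pseudo-effective, for a general fiber $F$. Feeding this into the weak positivity theorem \cite[Theorem~3.3]{CP}, \cite[Theorem~7.1]{CKT} gives the pseudo-effectivity of $K_{X/Z} + \Deltahor$ asserted in the proposition. The main obstacle is exactly the middle step: one must ensure that the positivity encoded in $L$ is not absorbed by the base directions $\pu \Omega_Z^1$ but genuinely descends to the fiber. This is what the two observations above guarantee --- that bigness of a line bundle restricts to the general, and hence moving, fiber by Kodaira's lemma, and that a big line bundle on $F$ cannot lie in the trivial part of the filtration, so that $k_0 \geq 1$ and the resulting subsheaf lives in a genuine tensor power of $\Omega_F^1(\log \Delta_F)$. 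Both $\dim Z \geq 1$ and the bigness of $L$ are essential here.
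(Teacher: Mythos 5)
Your proposal is correct and takes essentially the same route as the paper's proof: reduce to the general fiber via the weak positivity theorem, restrict the big line bundle to $F$, filter the tensor power using the trivial sub-bundle coming from the base directions to obtain a nonzero map $L\restr{F} \to \Omega_F^1(\log \Delta_F)^{\tensor k_0}$ with $k_0 \geq 1$, and apply \theoremref{thm:CP} inductively to $(F, \Delta_F)$. The only difference is that you spell out details the paper leaves implicit, namely Kodaira's lemma for the bigness of $L\restr{F}$ and the reason a big line bundle cannot map nontrivially into a trivial bundle.
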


\begin{proof}
Let $F$ be a general fiber of the morphism $p \colon X \to Z$; since $\dim Z \geq 1$,
we have $\dim F \leq \dim X - 1$. Denote by $\Delta_F$ the restriction of $\Delta$;
since $F$ is a general fiber, $\Delta_F$ is still a normal crossing divisor. Clearly
\[
        (K_{X/Z} + \Deltahor) \restr{F} = K_F + \Delta_F,
\]
and according to \cite[Theorem~7.3]{CKT}, the pseudo-effectivity of $K_{X/Z} +
\Deltahor$ will follow if we manage to show that $K_F + \Delta_F$ is pseudo-effective.

By hypothesis and by \lemmaref{lem:line-bundle}, there is a nonzero morphism 
\[
        L \to \OmX^1(\log \Delta)^{\tensor k}
\]
from a big line bundle $L$ to some tensor power of $\OmX^1(\log \Delta)$. Since $F$
is a general fiber of $p \colon X \to Z$, we can restrict this morphism to $F$ to
obtain a nonzero morphism
\[
        L_F \to \Bigl( \OmX^1(\log \Delta) \restr{F} \Bigr)^{\tensor k}.
\]
Here $L_F$ denotes the restriction of $L$ to the fiber; since $L$ is big, $L_F$ is
also big. 

The inclusion of $F$ into $X$ gives rise to a short exact sequence
\[
        0 \to \shf{N}_{F|X} \to \OmX^1(\log \Delta) \restr{F} \to
                \Omega_F^1(\log \Delta_F) \to 0,
\]
which induces a filtration on the $k$-th tensor power of the locally free sheaf in the
middle. Since the normal bundle $\shf{N}_{F|X}$ is trivial of rank $\dim Z$, we find,
by looking at the subquotients of this filtration, that there is a nonzero morphism
\[
        L_F \to \Omega_F^1(\log \Delta_F)^{\tensor j}
\]
for some $0 \leq j \leq k$. Because $L_F$ is big, we actually have $1 \leq j \leq k$.
Since we are assuming that \theoremref{thm:CP} is true for the pair $(F, \Delta_F)$,
the class $K_F + \Delta_F$ is big on $F$, hence pseudo-effective. Appealing to
\cite[Theorem~7.3]{CKT}, we deduce that the class $K_{X/Z} + \Deltahor$ is
pseudo-effective on~$X$.~\hfill $\Box$
\end{proof}

By induction on the dimension, the two assumptions of \propositionref{prop:fiber} are
met in our case, and the class $K_{X/Z} + \Deltahor$ is therefore pseudo-effective.
According to \cite[Theorem~7.1]{CKT}, this implies that $K_{X/Z} + \Deltahor -
R(p)$ is also pseudo-effective.%
\footnote{As stated, both \cite[Theorem~3.3]{CP} and \cite[Theorem~7.1]{CKT} actually
assume that $K_X + \Delta$ is
pseudo-effective, but in the case of a morphism $p \colon X \to Z$, the proofs go
through under the weaker hypothesis that $K_{X/Z} + \Deltahor$ is pseudo-effective.}
Going back to the inequality in \eqref{eq:shFdel}, we find that
\[
        0 \geq -\bigl( K_{X/Z} + \Deltahor      - R(p) \bigr) \cdot \alpha \geq
        c_1(\shFdel) \cdot \alpha > 0,
\]
and so we have reached the desired contradiction. The conclusion is that $c_1(\shQ)$
is indeed pseudo-effective, and so \theoremref{thm:main} and \theoremref{thm:CP} are
proved.

\begin{remark}
{\rm Most of the argument, for example the proof of \lemmaref{lem:dimZ}, goes through
when some tensor power of $\OmX^1(\log \Delta)$ contains a
subsheaf with \emph{pseudo-effective} determinant. But \theoremref{thm:main} is
obviously not true under this weaker hypothesis: for example, on the product $E
\times \PP^1$ of an elliptic curve and $\PP^1$, there are nontrivial one-forms, yet
the canonical bundle is not pseudo-effective. What happens is that the last step in
the proof of \propositionref{prop:fiber} breaks down: when $L$ is not big, it may be
that $j=0$ (and $L_F$ is then trivial).}
\end{remark}

\providecommand{\bysame}{\leavevmode\hbox to3em{\hrulefill}\thinspace}
%
%

\bibliographystyle{amsalpha}
\bibliographymark{References}

\end{document}